    \let\@fnsymbol\@alph
\newcommand{\ptd}{\mathcal{P}^1(\mathbb{T}^d)}
\newcommand{\tdxptd}{\mathbb{T}^d\times\mathcal{P}^1(\mathbb{T}^d)}
\newcommand{\ac}[2]{\mathcal{C}_{{#1},{#2}}}
\newcommand{\ptrd}{\mathcal{P}^1(\mathbb{T}^d\times \mathbb{R}^d)}
\newcommand{\inttd}{\int_{\mathbb{T}^d}}
\newcommand{\inttdtd}{\int_{\mathbb{T}^d\times \mathbb{T}^d}}
\newcommand{\intrd}{\int_{\mathbb{R}^d}}
\newcommand{\inttrd}{\int_{\mathbb{T}^d\times \mathbb{R}^d}}
\newcommand{\pc}[2]{\mathcal{P}^1(\mathcal{C}_{{#1},{#2}})}
\title{Viability theorem for deterministic mean field type control systems}
\author{Yurii Averboukh\thanks{Krasovskii Institute of Mathematics and Mechanics,\ \ \texttt{e-mail: ayv@imm.uran.ru, averboukh@gmail.com}}{ }\thanks{
		Ural Federal University}}
\date{}
\begin{document}
\maketitle

\begin{abstract}
	A mean field type control system is 	a dynamical system in the Wasserstein space describing an evolution of a large population of agents with mean-field interaction under a control of a unique decision maker. 	
	We develop the viability theorem for the mean field type control system. To this end we introduce a set of tangent elements to the given set of probabilities. Each tangent element is a distribution on the  tangent bundle of the phase space. The viability theorem for mean field type control systems is formulated in the classical way: the given set of probabilities on phase space is viable if and only if the set of tangent distributions intersects with the set of distributions feasible by virtue of dynamics. 
	
	\noindent\textbf{MSC classifications:} 49Q15, 93C10, 49J53, 46G05, 90C56.
	
	\noindent\textbf{Keywords:} Viability theorem; mean field type control system; tangent distribution; nonsmooth analysis in the Wasserstein space.
	% \PACS{PACS code1 \and PACS code2 \and more}
	% \subclass{MSC code1 \and MSC code2 \and more}
\end{abstract}

\section{Introduction}\label{sec:intro}

The theory of mean field type control system is concerned with a control problem for a large population of agents with mean-field interaction governed by a  unique decision maker. This topic is closely related with the theory of mean field games  proposed by Lasry and Lions
in~\cite{Lions01},~\cite{Lions02} and simultaneously by Huang, Caines and Malham\'{e}~\cite{Huang5}.  The mean field game theory studies the Nash equilibrium for the large population of independent agents. The similarities and differences between mean field games and mean field type control problems are discussed~in~\cite{Bensoussan_Frehse_Yam_book},~\cite{Carmona_Delarue_Lachapelle_MFG_vs_MF_control}.

The study of mean field type control systems started with  paper~\cite{ahmed_ding_controlld}. Now the mean field type control systems are examined with the help of the classical methods of the optimal control theory. The existence theorem for optimal controls is proved in~\cite{Bahlali_Mezerdi_Mezerdi_existence}.  An analog of Pontryagin maximum principle is obtained in~\cite{Andersson_Djehiche_2011},~\cite{Bensoussan_Frehse_Yam_book},~\cite{Buckdahn_Boualem_Li_PMP_SDE},~\cite{Carmona_Delarue_PMP} (see, also, \cite{Pogodaev} where the case of system with no interaction between agents is studied). Papers ~\cite{Bayraktar_Cosso_Pham_randomized},~\cite{Bensoussan_Frehse_Yam_book},~\cite{Lauriere_Pironneau_DPP_MF_control},~\cite{Pham_Wei_2015_DPP_2016} are concerned with the dynamical programming for mean field type control systems. It is well known that the dynamic programming principle leads to Bellman equation. For the mean field type control problems the Bellman equation is a partial differential equation on the space of probabilities~\cite{Bensoussan_Frehse_Yam_book},~\cite{Bensoussan_Frehse_Yam_equation},~\cite{Carmona_Delarue_master_brief}. Results of~\cite{Pham_Wei_2015_Bellman} states that the value function of the optimal control problem for mean field type control system is a viscosity solution of the Bellman equation.  The link between the minimum time function and the viscosity solutions of the corresponding Bellman equation for the  special case when the dynamics of each agent is deterministic and depends only on her state is derived in ~\cite{Marigonda_et_al_2015}. 

The viability theory provides a different tool to study optimal control problems (see~\cite{Aubin_new},~\cite{Subb_book} and references therein). In particular, for systems governed by ordinary differential equations the epigraph and hypograph of the value function are viable under certain differential inclusions~\cite{Subb_book}. Now the viability theory is developed for the wide range of dynamical systems (see~\cite{Aubin},~\cite{Aubin_new},~\cite{Aubin_Cellina} and reference therein). The key result of the viability theory is the reformulation of the viability property in the terms of tangent vectors. In particular, this theorem implies the description of the value function of optimal control problem via directional derivatives, whereas the viscosity solutions are formulated using sub- and superdifferentials. We refer to~\cite{Subb_book} for the equivalence between these two approaches for systems governed by ordinary differential equations. 

Actually,  the viability theorem for the dynamical systems in the Wasserstein space was first proved in~\cite{Viability_wasserstein_probabilistic}. The system examined in that paper arises in the optimal control problem with the  probabilistic knowledge of initial
condition. It is described by the linear Liouville equation. The viability theorem proved in~\cite{Viability_wasserstein_probabilistic} relies on embedding of the probabilities into the space of random variables and it is formulated via normal cones.

In the paper we prove the viability theorem for the deterministic mean field type control system of the general form for the case when the phase space of each agent is the torus. To this end we introduce a set of tangent elements to the given set of probabilities.  Each tangent element is a distribution on the  tangent bundle of the phase space. The viability theorem for mean field type control systems is formulated in the classical way: the given set of probabilities on phase space is viable if and only if the set of tangent distributions intersects with the set of distributions feasible by virtue of the dynamics. 

Our concept of tangency is close to the notion of geometric tangent space to the Wasserstein space introduced in \cite{Gigli}. It  was studied in \cite{Gigli}, \cite{Lott}. In those papers the relation between the geometric tangent space and the `space
of gradients' (see \cite[Definition 8.4.1]{Ambrosio} for details) is derived.

Notice that for the Banach case the notions of set of tangent vectors (tangent cone) and   subdifferential to a real-valued functions are closely related~\cite{Mordukhovich}. The subdifferential to a real-valued function defined on the Wasserstein space is introduced in~\cite[\S 10.3]{Ambrosio}. The link between this subdifferential and the set of tangent distributions introduced in the paper is the subject of the future research.

The paper is organized as follows. In  Section~\ref{sec:preliminaries} we introduce the general notations. The examined class of the dynamical systems is presented in Section~\ref{sec:MFDI}. The viability theorem is formulated in Section~\ref{sec:statement}. The auxiliary lemmas are introduced in Section~\ref{sec:lemmas}. Sufficiency and necessity parts of the viability theorem are proved in Sections~\ref{sect:sufficiency} and~\ref{sect:necissity} respectively.

\section{Preliminaries}\label{sec:preliminaries}

Given  a metric space $(X,\rho_X),$ a set $K\subset X$, $x_*\in X$, and $a\geq 0$ denote by $B_a(x_*)$ the ball of radius $a$ centered in $x_*$. If $X$ is a normed space and $x_*$ is the origin, we  write simply $B_a$ instead of $B_a(0)$. Further, denote
$$\mathrm{dist}(x_*,K)\triangleq \inf\{\rho_X(x_*,x):x\in K\}. $$

If $(X,\rho_X)$ is a separable metric space, then denote by $\mathcal{P}^1(X)$ the set of probabilities $m$ on $X$ such that, for some (and, consequently, for all) $x_*\in X$, 
$$\int_X \rho_X(x,x_*)m(dx)<\infty. $$ If $m_1,m_2\in\mathcal{P}^1(X)$, then define $1$-Wasserstein metric by the rule:
\begin{equation}\label{notion:wasser}
\begin{split}
W_1(m_1,m_2)&=\inf\left\{\int_{X\times X}\rho_X(x_1,x_2)\pi(d(x_1,x_2)):\pi\in\Pi(m_1,m_2)\right\}\\&=
\sup\left\{\int_X\phi(x)m_1(dx)-\int_X\phi(x)m_2(dx):\phi\in\mathrm{Lip}_1(X)\right\}.
\end{split}
\end{equation} 
Here $\Pi(m_1,m_2)$ is the set of plans between $m_1$ and $m_2$, i.e. \begin{equation*}\begin{split}\Pi(m_1,m_2)\triangleq \{\pi&\in\mathcal{P}^1(X\times X): \pi(A\times X)=m_1(A),\\ &\pi( X\times A)=m_2(A)\mbox{ for any mesurable } A\subset X\},\end{split} \end{equation*}
$\mathrm{Lip}_\varkappa(X)$ denotes the set of $\varkappa$-Lipschitz continuous functions on $X$.

If $\pi\in \mathcal{P}^1(X\times Y)$, where $(Y,\rho_Y)$ is a separable metric space, then denote by $\pi(\cdot|x)$ a conditional probability on $Y$ given $x$ that is a weakly measurable mapping $x\mapsto\pi(\cdot|x)\in \mathcal{P}^1(Y)$ obtained by disintegration of $\pi$ along its marginal on $X$.

If $(\Omega_1,\mathcal{F}_1)$, $(\Omega_2,\mathcal{F}_2)$ are measurable spaces, $m$ is a probability on $(\Omega_1,\mathcal{F}_1)$, $h:\Omega_1\rightarrow\Omega_2$ is measurable, then denote by $h_\#m$ a probability on $(\Omega_2,\mathcal{F}_2)$ given by the rule: for any $A\in\mathcal{F}_2$,
$$(h_\#m)(A)\triangleq m(h^{-1}(A)). $$

For simplicity we assume that the phase space is the $d$-dimensional torus $\mathbb{T}^d=\mathbb{R}^d/\mathbb{Z}^d$. Recall that the tangent space to $\td$ is $\rd$. 

Let $\mathcal{C}_{s,r}$ denote $C([s,r];\mathbb{T}^d)$.
Note that 
\begin{equation}\label{estima:wasser}
W_1(e_t{}_\#\chi_1,e_t{}_\#\chi_2)\leq W_1(\chi_1,\chi_2).
\end{equation}

If $G:[s,r]\rightrightarrows \rd$, then denote by $\int_s^r G(t)dt$ the Aumann integral of $G$ i.e.
$\int_s^r G(t)dt$ is the set of all integrals  $\int_s^rg(t)dt$ of  integrable functions $g:[s,r]\rightarrow\mathbb{R}^d $ such that $g(t)\in G(t)$.

\section{Mean field differential inclusions}\label{sec:MFDI}
This paper in concerned with the mean field type control problem for deterministic case. This is a dynamical system on a space of probabilities, where the state of the system is given by the probability $m(t)$ obeying the following equation: for all $\phi\in C(\td)$,
$$\frac{d}{dt}\inttd \phi(x)m(t,dx)=\langle f(x,m(t),u(t,x)),\nabla \phi(x)\rangle m(t,dx). $$ Here $u(t,x)$ is a control policy. 

This equation can be rewritten in the operator form
\begin{equation}\label{sys:control}
\frac{d}{dt}m(t)=\langle f(\cdot,m(t),u(t,\cdot)),\nabla\rangle m(t),
\end{equation}

Control system~(\ref{sys:control}) describes the evolution of a large population of agents when the dynamics of each agent is given by 
\begin{equation}\label{sys:control_x}
\frac{d}{dt}x(t)=f(x(t),m(t),u(t)).
\end{equation}

There are two ways of the relaxation of the control problem.  The first approach relies on measure-valued control. For mean field control systems, it was developed in several papers. Within the framework of this approach the existence result of the optimal control problem is obtained~\cite{Bahlali_Mezerdi_Mezerdi_existence}. Additionally, this approach permits the study of the limit of many particle systems~\cite{Lacker_limit}. 
We will use the second approach. It is more convenient in the viewpoint of the viability theory. The main idea of the second approach is to replace the original control system with the corresponding differential inclusion. Applying this  method to the mean field type control system, we formally replace  system~(\ref{sys:control}) with the mean field type differential inclusion (MFDI)
\begin{equation}\label{sys:mftdi}
\frac{d}{dt}m(t)\in\langle F(\cdot,m(t)),\nabla\rangle m(t).
\end{equation} Here $F(x,m)\triangleq \mathrm{co}\{f(x,m,u):u\in U\}$, symbol $\cdot$ stands for the state variable.

\begin{definition}\label{def:solution_MFDI}
	We say that the function $[0,T]\ni t\mapsto m(t)\in\mathcal{P}^1(\mathbb{T}^d)$ is a solution to~(\ref{sys:mftdi}),  if there exists a probability $\chi\in\mathcal{P}^1(\mathcal{C}_{0,T})$ such that
	\begin{enumerate}
		\item $m(t)=e_t{}_\#\chi$;
		\item any $x(\cdot)\in\mathrm{supp}(\chi)$ is absolutely continuous and,  for a.e. $t\in [0,T]$, 
		\begin{equation}\label{diff_incl:F}
		\dot{x}\in F(x(t),m(t)). 
		\end{equation}
	\end{enumerate}
\end{definition}
\begin{remark}
	The introduced definition of the solutions to the mean field type differential inclusion corresponds to the control problem for a large population of agents. It includes the solutions   defined by selectors of right-hand side of~(\ref{sys:mftdi}). This means that if the flow of probabilities  	$[0,T]\ni t\mapsto m(t) \mathcal{P}(\td)$ is such that there exists a function $w:[0,T]\times\td\rightarrow \rd$ satisfying the following properties
	\begin{itemize}
		\item  $w(t,x)\in F(x,m(t))$,
		\item  $\forall \phi\in C^1([0,T]\times\td)$  $$\int_0^T\inttd\left[\frac{\partial \phi(t,x)}{\partial t}+\langle w(t,x),\nabla\phi(t,x)\rangle\right]m(t,dx)dt=0, $$
	\end{itemize} then
	by~\cite[Theorem 8.2.1]{Ambrosio} $m(\cdot)$ solves~(\ref{sys:mftdi}) in the sense of  Definition~\ref{def:solution_MFDI} under weak assumptions on $f$ and $U$. 
\end{remark}
\begin{remark}
	There is a natural link between the solution of MFDI~(\ref{sys:mftdi}) and the relaxed controls of~(\ref{sys:control}).  
	Recall that a relaxed controls for a system described by a ordinary differential equation is a probability  $\alpha$ on $[0,T]\times U$ with the marginal on $[0,T]$ equal to Lebesgue measure. Denote by $\mathcal{U}$ the set of relaxed controls.  Given flow of probabilities $m(\cdot)$, initial state $y\in\td$ and relaxed control $\alpha\in \mathcal{U}$ denote by $x[\cdot,m(\cdot),y,\alpha]$ the solution of the equation
	\begin{equation}\label{sys:control_x_alpha}
	x(t)=y+\int_{[0,T]\times U}f(x(\tau),m(\tau),u)\mathbf{1}_{[0,t]}(\tau)\alpha(d(\tau,u)). 
	\end{equation} The function $x[\cdot,m(\cdot),y,\alpha]$ is a motion of the system ~(\ref{sys:control_x}) generated by the relaxed control $\alpha$. Further, let $\varsigma$ be a probability on $\rd\times \mathcal{U}$. We say that $[0,T]\ni t\mapsto m(t)\in\ptd$ is a flow of probabilities generated by $\varsigma$ if the marginal distribution of $\varsigma$ on $\rd$ is equal to $m(0)$ and, for any $t\in [0,T]$,
	\begin{equation}\label{eq:m_remark}
	m(t)=x[t,m(\cdot),\cdot,\cdot]_\#\varsigma. 
	\end{equation} If the existence and uniqueness theorem for~(\ref{sys:control_x_alpha}) holds true, then the solutions to~(\ref{sys:control}) determined by~(\ref{eq:m_remark})  is equivalent to the deterministic variant of the definition of  solutions to the controlled McKean-Vlasov equation proposed in~\cite{Lacker_limit}.
	
	Using~\cite[Theorem VI.3.1]{Warga}, one can prove under the conditions imposed below that $m(\cdot)$ is a flow of probabilities generated by a certain distribution of relaxed controls $\varsigma$, if and only if $m(\cdot)$ is a solution to MFDI~(\ref{sys:mftdi}).
\end{remark}

We put the following conditions:
\begin{enumerate}
	\item $F(x,m)=\mathrm{co}\{f(x,m,u):u\in U\}$, where $f$ is a continuous function defined on $\tdxptd\times U$ with values in $\mathbb{R}^d$;
	\item $U$ is compact;
	\item there exists a constant $L$ such that, for all $x_1,x_2\in\td$, $m_1,m_2\in\ptd$, $u\in U$,
	$$\|f(x_1,m_1,u)-f(x_2,m_2,u)\|\leq L(\|x_1-x_2\|+W_1(m_1,m_2)). $$
\end{enumerate}
Note that since $\td$, $\ptd$ are compact and the function $f$ is continuous, one can find a constant $R$ such that, for any  $v\in F(x,m)$, $x\in\td$, $m\in\ptd$,
\begin{equation}\label{estima:F_bound}
\|v\|\leq R.
\end{equation} 
Further, for any $v,v'\in\rd$, $x,x'\in\td$, $m,m'\in \ptd$,
\begin{equation}\label{estima:F_Lipshitz}
\begin{split}
|\mathrm{dist}(v,F(x,m))&-\mathrm{dist}(v',F(x',m'))|\\&\leq \|v-v'\|+L(\|x-x'\|+W_1(m,m')).
\end{split}
\end{equation} Additionally, if $s,r\in [0,T]$, $s<r$, $v,v'\in \rd$, $x(\cdot),x'(\cdot):[s,r]\rightarrow\td$, $m(\cdot),m'(\cdot):[s,r]\rightarrow\ptd$ are integrable, then
\begin{equation}\label{estima:dist_F_int_Lipshitz}\begin{split}\Bigl|\mathrm{dist}\Bigl(v,&\int_s^rF(x(t),m(t)dt\Bigr)- \mathrm{dist}\Bigl(v',\int_s^rF(x'(t),m'(t)dt\Bigr)\Bigr|\\ &\leq \|v-v'\|+L\int_s^r(\|x(t)-x'(t)\|+W_1(m(t),m'(t)))dt.\end{split} \end{equation}

Under the imposed conditions, one can prove that, for any $m_0\in\ptd$, and any $T>0$, there exists at least one flow of probabilities  $m(\cdot)$ solving MFDI~(\ref{sys:mftdi}) on $[0,T]$ such that $m(0)=m_0$.

\section{Statement of the Viability theorem}\label{sec:statement}

\begin{definition}\label{def:viable}
	We say that $K\subset \ptd$ is viable under MFDI~(\ref{sys:mftdi})  if, for any $m_0\in K$, there exist $T>0$ and a solution to MFDI~(\ref{sys:mftdi}) on $[0,T]$ $m(\cdot)$ such that $m(0)=m_0$, and $m(t)\in K$ for all $t\in[0,T]$.
\end{definition}
To characterize the viable sets we introduce the notion of tangent probability to a set (see Definition~\ref{def:tangent} below).

To this end denote by $\mathcal{L}(m)$ the set of probabilities $\beta$ on $\td\times \rd$ such that its marginal distribution on $\td$ is equal to $m$ and $$\inttrd\|v\|\beta(d(x,v))<\infty.$$ 
\begin{proposition} The set $\mathcal{L}(m)$ is  closed in $\mathcal{P}^1(\td\times\rd)$.
\end{proposition}
\begin{proof}
	Let $\beta_n\in\mathcal{L}(m)$, and $W_1(\beta_n,\beta)\rightarrow 0$ as $n\rightarrow \infty$. We have that, for any $\phi\in C(\td)$,
	\begin{equation}\label{equal:phi_beta_n}
	\int_{\td\times\rd}\phi(x)\beta_n(d(x,v))=\int_{\td}\phi(x)m(dx).
	\end{equation} Since, $\{\beta_n\}$ narrowly converges to $\beta$, passing to the limit in (\ref{equal:phi_beta_n}), we get that
	$$\beta\in \mathcal{L}(m). $$
\end{proof}

\begin{remark}
	One can introduce  a special  metric on $\mathcal{L}(m)$ in the following way. Let $\beta_1,\beta_2\in\mathcal{L}(m)$, denote by $\Gamma(\beta_1,\beta_2)$ the set of probabilities $\gamma$ on $\td\times \rd\times\rd$ such that, for any measurable $A\subset\td$, $C_1,C_2\subset\rd$, the following equalities hold true:
	$$\gamma(A\times C_1\times\rd)=\beta_1(A\times C_1),\ \     
	\gamma(A\times\rd\times C_2)=\beta_2(A\times C_2). $$
	Define $\mathcal{W}(\beta_1,\beta_2)$ by the rule
	\begin{equation}\label{intro:metric_beta}
	\mathcal{W}(\beta_1,\beta_2)\triangleq \inf\left\{\int_{\td\times \rd\times\rd}\|v_1-v_2\|\gamma(d(x,v_1,v_2)):\gamma\in \Gamma(\beta_1,\beta_2)\right\}.
	\end{equation}
	
	Note that the metric $\mathcal{W}$ is analogous to one introduced in \cite[Definition 5.1]{Gigli}.
	
	Using the  methods of \cite[Proposition 7.1.5]{Ambrosio} and \cite[Proposition 5.2]{Gigli}, one can prove that
	\begin{enumerate}
		\item 	$\mathcal{W}$ is a metric on $\mathcal{L}(m)$;
		\item $\mathcal{L}(m)$ with metric $\mathcal{W}$ is complete and separable;
		\item $$\mathcal{W}(\beta_1,\beta_2)=\int_{\td}W_1(\beta_1(\cdot|x),\beta_2(\cdot|x))dx; $$ here  $\beta_i(\cdot|x)$ denotes the disintegration of $\beta_i$ w.r.t. projection on $\td$.
	\end{enumerate}
	
	Notice that the topology on $\mathcal{L}(m)$ induced by $\mathcal{W}$ is stronger than the topology induced by $W_1$.
\end{remark}

Further, for $\tau>0$, define the operator $\Theta^\tau:\td\times\rd\rightarrow \td$ by the rule: for $(x,v)\in\td\times\rd$, 
\begin{equation}\label{def:Theta}
\Theta^\tau(x,v)\triangleq x+\tau v. 
\end{equation}
If $\beta\in\mathcal{L}(m)$, then $\Theta^\tau{}_\#\beta$ is a shift of $m$ through $\beta$.

\begin{definition}\label{def:tangent} Let $a>0$.
	We say that $\beta\in\mathcal{L}(m)$ is a tangent probability to $K$ at $m\in\ptd$ with the radius $a$, if there exist  sequences $\{\tau_n\}_{n=1}^\infty\subset (0,+\infty)$, $\{\beta_n\}_{n=1}^\infty\subset \mathcal{L}(m)$ such that $\mathrm{supp}(\beta_n)\subset \td\times B_a$ and
	$$\frac{1}{\tau_n}\mathrm{dist}(\Theta^{\tau_n}{}_\#\beta_n,K)\rightarrow 0,\ \ W_1(\beta_n,\beta)\rightarrow 0,\ \  \tau_n\rightarrow 0\mbox{ as }n\rightarrow\infty.$$ 
	
	Let us denote the set of tangent probabilities with the radius $a$ to $K$  by $\mathcal{T}^a_K(m)$.
\end{definition}
\begin{remark} For $\lambda\in\mathbb{R}$, let  the  rescaling operation $\mathrm{S}^\lambda:\td\times\rd\rightarrow\td\times\rd$ map a pair $(x,v)$ to
	$(x,\lambda v). $ Note that $\mathrm{S}^{\lambda_1}\mathrm{S}^{\lambda_2}=\mathrm{S}^{\lambda_1\lambda_2}$. 
	Define the scalar multiplication on $\mathcal{L}(m)$ by the rule: $$\lambda\cdot\beta\triangleq\mathrm{S}^\lambda{}_\#\beta.$$
	
	Under this definition the set of all tangent probabilities $\cup_{a>0}\mathcal{T}_K^a(m)$ becomes a cone.
	
	Indeed, for $\lambda> 0$, the mapping $\beta\mapsto \mathrm{S}^\lambda{}_\#\beta$ is a one-to-one transform of $\mathcal{L}(m)$. Furthermore, for any positive numbers $\tau$ and $\lambda$,
	$$\Theta^{\tau/\lambda}{}_\#(\mathrm{S}^\lambda{}_\#\beta)=\Theta^\tau{}_\#\beta. $$
	Thus, if $\beta\in\mathcal{T}_K^a(m)$, $\lambda>0$, then  $\lambda\cdot\beta=\mathrm{S}^\lambda{}_\#\beta\in \mathcal{T}_K^{\lambda a}(m)$.
\end{remark}

\begin{remark}
	Generally,  given $K\subset\ptd$, $m\in\ptd$, $\beta\in\mathcal{T}_K^a(m)$, one can not  find a function $w:\td\rightarrow\rd$ such that
	\begin{equation}\label{repres:beta}
	\beta(d(x,v))=w(x)m(dx)dv,
	\end{equation}  i.e. there is no embedding of the set $\mathcal{T}_K^a(m)$ into the set of measurable functions on $\td$ with valued on $\rd$.
	Indeed, let $d=1$,  $K=\{(\delta_{1/2-t}+\delta_{1/2+t})/2:t\in [0,\varepsilon]\}$. Here $\delta_\xi$ stands for the Dirac measure concentrated at $\xi$. In this case,
	$$\mathcal{T}_{K}(\delta_{1/2})=\{(\delta_{(1/2,-1)}/2+\delta_{(1/2,+1)})/2\}$$ and representation~(\ref{repres:beta}) does not hold true.  
\end{remark}

Denote by $\mathcal{F}(m)$ the set of probabilities $\beta\in\mathcal{L}(m)$ such that 
$$\inttrd\mathrm{dist}(v,F(x,m))\beta(d(x,v))=0. $$

\begin{theorem}[Viability theorem]\label{th:viability}
	A closed set $K\subset \ptd$ is viable under MFDI~(\ref{sys:mftdi}) if and only if, there exists a constant $a>0$ such that, for any $m\in K$, \begin{equation}\label{prop:nonemptyness}
	\mathcal{T}_K^a(m)\cap\mathcal{F}(m)\neq\varnothing. 
	\end{equation}
\end{theorem}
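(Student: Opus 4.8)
My overall strategy is to mirror the classical viability theorem for differential inclusions (as in Aubin's work), adapted to the Wasserstein setting, which means the argument splits naturally into a necessity part and a sufficiency part. For \textbf{necessity}, suppose $K$ is viable and fix $m\in K$. By definition there is a solution $m(\cdot)$ of MFDI~(\ref{sys:mftdi}) on some $[0,T]$ with $m(0)=m$ and $m(t)\in K$ for all $t$; let $\chi\in\mathcal{P}^1(\ac0T)$ be the associated probability on curves. The idea is to produce the required element of $\mathcal{T}_K(m)\cap\mathcal{F}(m)$ as a weak-$*$ limit of finite-difference distributions. Concretely, for $\tau>0$ consider the map $x(\cdot)\mapsto(x(0),\tfrac{1}{\tau}(x(\tau)-x(0)))$ from $\ac0T$ to $\td\times\rd$ and push $\chi$ forward under it to obtain $\beta^\tau\in\mathcal{L}(m)$. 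Because each curve satisfies $\dot x\in F(x(t),m(t))$ with $\|\dot x\|\le R$ by~(\ref{estima:F_bound}), the supports of the $\beta^\tau$ lie in $\td\times B_R$, so by Proposition~\ref{propos:w_beta_property}(3) we may extract a subsequence $\tau_n\to0$ with $\beta^{\tau_n}\to\beta$ in $\mathcal{L}(m)$. One checks that $\Theta^{\tau_n}{}_\#\beta^{\tau_n}=x(\tau_n){}_\#\chi = m(\tau_n)\in K$, so $\mathrm{dist}(\Theta^{\tau_n}{}_\#\beta^{\tau_n},K)=0$; combined with $\mathcal{W}$-convergence $\beta^{\tau_n}\to\beta$ and the Lipschitz estimate~(\ref{estima:wasser}) for $\Theta^{\tau_n}$, this gives $\tfrac{1}{\tau_n}\mathrm{dist}(\Theta^{\tau_n}{}_\#\beta,K)\to0$, i.e. $\beta\in\mathcal{T}_K(m)$. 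That $\beta\in\mathcal{F}(m)$ follows from the fact that $\tfrac1{\tau}(x(\tau)-x(0))=\tfrac1\tau\int_0^\tau\dot x(t)dt$ lies, up to a vanishing error controlled by continuity of $t\mapsto m(t)$ and~(\ref{estima:dist_F_int_Lipshitz}), near $F(x(0),m(0))$, so $\int\mathrm{dist}(v,F(x,m))\beta^{\tau_n}(d(x,v))\to0$ and passing to the limit using Lipschitz continuity~(\ref{estima:F_Lipshitz}) and $\mathcal{W}$-convergence yields $\int\mathrm{dist}(v,F(x,m))\beta(d(x,v))=0$.

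For \textbf{sufficiency}, assume~(\ref{prop:nonemptyness}) holds at every $m\in K$; I would build a viable solution by the standard Euler-scheme/Zorn's-lemma approximation. Fix $m_0\in K$. For a small step $h>0$, construct a piecewise approximate trajectory: at $m_0$ pick $\beta_0\in\mathcal{T}_K(m_0)\cap\mathcal{F}(m_0)$, choose $\tau_0\le h$ from Definition~\ref{def:tangent} so that $\mathrm{dist}(\Theta^{\tau_0}{}_\#\beta_0,K)\le \varepsilon\tau_0$, project $\Theta^{\tau_0}{}_\#\beta_0$ onto $K$ to get $m_1\in K$, and iterate. Since $\beta_0\in\mathcal{F}(m_0)$, the increment $\Theta^{\tau_0}{}_\#\beta_0$ is (after conditioning) a shift along velocities lying in $F(\cdot,m_0)$, so the resulting piecewise-defined flow and its lift to curve space nearly satisfy the differential inclusion. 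Control the total trajectory via the bound~(\ref{estima:F_bound}) (so displacements are $O(\tau)$) and the Lipschitz hypotheses, producing for each $\varepsilon>0$ an approximate solution $m^\varepsilon(\cdot)$ on a uniform interval $[0,T]$ with $m^\varepsilon(t)$ within $O(\varepsilon)$ of $K$ and $m^\varepsilon(\cdot)$ an approximate solution of the MFDI. Then pass to the limit $\varepsilon\to0$: the lifts $\chi^\varepsilon\in\mathcal{P}^1(\ac0T)$ have supports in uniformly Lipschitz (hence equicontinuous, uniformly bounded) curves, so $\{\chi^\varepsilon\}$ is tight and a subsequence converges weakly to some $\chi$; the limit flow $m(t)=e_t{}_\#\chi$ stays in the closed set $K$ and, by a closure argument using~(\ref{estima:dist_F_int_Lipshitz}) together with the convexity of $F(x,m)$, solves MFDI~(\ref{sys:mftdi}) in the sense of Definition~\ref{def:solution_MFDI}.

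\textbf{Main obstacles.} The delicate point in necessity is the interchange of limits: one needs $\tfrac1{\tau_n}\mathrm{dist}(\Theta^{\tau_n}{}_\#\beta,K)\to0$ from $\tfrac1{\tau_n}\mathrm{dist}(\Theta^{\tau_n}{}_\#\beta^{\tau_n},K)=0$, which requires that $\mathcal{W}(\beta^{\tau_n},\beta)=o(\tau_n)$ — this is \emph{not} automatic from mere convergence and is the real content; one must choose the subsequence carefully (a diagonal argument selecting $\tau_n$ so that the Wasserstein defect is small relative to $\tau_n$, exploiting that $\beta^\tau$ is itself built from the solution). On the sufficiency side, the hard part is that Definition~\ref{def:tangent} only gives a \emph{sequence} $\tau_n\to0$ along which the rescaled distance vanishes, with no uniform modulus as $m$ ranges over $K$; assembling these into an Euler scheme with a uniform existence time $T$, and simultaneously keeping the approximate velocities inside (a neighborhood of) $F(\cdot,m)$ so that the closure theorem applies, is the technical crux and will absorb most of the work, including the measurable-selection/disintegration bookkeeping needed to realize each projection step as a genuine element of $\mathcal{L}(m)$.
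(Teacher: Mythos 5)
Your architecture matches the paper's (difference quotients $\Delta^\tau{}_\#\chi$ for necessity; an Euler scheme with projection onto $K$ plus compactness of lifted curve measures for sufficiency), but both of the points you flag as ``obstacles'' are left unresolved, and in the necessity case your proposed fix would not work. You claim one needs $\mathcal{W}(\beta^{\tau_n},\beta)=o(\tau_n)$ and suggest a diagonal choice of $\tau_n$ making ``the Wasserstein defect small relative to $\tau_n$.'' That is circular: the limit $\beta$ is only defined after the subsequence is fixed, so you cannot tune $\tau_n$ against it. The actual resolution is a scaling estimate (Lemma~\ref{lm:Theta_W_distance}): for $\beta_1,\beta_2\in\mathcal{L}(m)$ one has $W_1(\Theta^\tau{}_\#\beta_1,\Theta^\tau{}_\#\beta_2)\leq\tau\,\mathcal{W}(\beta_1,\beta_2)$, because $\Theta^\tau$ displaces points by $\tau v$ only. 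Hence $\tfrac1{\tau_n}\mathrm{dist}(\Theta^{\tau_n}{}_\#\beta,K)\leq\mathcal{W}(\beta_{\tau_n},\beta)$, and mere $\mathcal{W}$-convergence suffices. (The estimate~(\ref{estima:wasser}) you invoke instead is about $e_t$ on curve space and carries no factor $\tau$, so it does not give the conclusion.) The rest of your necessity argument, including $\int\mathrm{dist}(v,F(x,m_0))\beta_\tau\leq LR\tau$ and passage to the limit, is sound.

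For sufficiency, you correctly identify the crux --- Definition~\ref{def:tangent} gives no uniform modulus in $m$, so the step sizes of a naive Euler scheme could shrink to zero before reaching a fixed time $T$ --- but you do not supply the argument that overcomes it, and without it the scheme does not close. The paper's device (Lemma~\ref{lm:one_step}) is a compactness/open-cover argument: for each $\mu\in K$ fix a single time $r_\mu$ witnessing tangency at $\mu$, let $\mathcal{E}_n(\mu)$ be the set of $m$ admitting an approximate tangent element at that \emph{same} time $r_\mu$, prove each $\mathcal{E}_n(\mu)$ is open by transporting $\beta\in\mathcal{L}(m)$ to $\pi_{m',m}*\beta\in\mathcal{L}(m')$ along an optimal plan (with the stability estimates of Lemmas~\ref{lm:transfer_Theta} and~\ref{lm:distance_F}), and extract a finite subcover of the compact $K$ to get a uniform lower bound $\theta_n>0$ on admissible steps; this guarantees the scheme reaches $T$ in finitely many steps. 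The same composition-of-plans operation $\pi_{m',m}*\beta$ is also what handles the bookkeeping you wave at: after projecting onto $K$ the actual iterate $\mu_n^j$ and its projection $\nu_n^j$ differ, and the tangent element chosen at $\nu_n^j$ must be transported back to $\mu_n^j$ with controlled error (inequality~(\ref{ineq:W_1_mu_nu})) before the curve measure can be concatenated. Your closure argument for the limit $\chi$ is essentially the paper's, but it relies on the quantitative output $(r-s)(1+2LT+2LR)/n$ of the scheme, which in turn depends on the uniform step bound you have not established.
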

The  Viability theorem is proved in Sections~\ref{sect:sufficiency},~\ref{sect:necissity}. The proof relies on auxiliary constructions and lemmas introduced in the next section.

\section{Properties of tangents probabilities}\label{sec:lemmas}
Let $(X_1,\rho_1)$, $(X_2,\rho_2)$, $(X_3,\rho_3)$ be separable metric spaces. Let $\pi_{1,2}$, $\pi_{2,3}$ be probabilities on $X_1\times X_2$ and $X_2\times X_3$, respectively. Assume that $\pi_{1,2}$ and $\pi_{2,3}$ have the same  marginal distributions on $X_2$. 
Define the probability $\pi_{1,2}*\pi_{2,3}\in\mathcal{P}(X_1\times X_3)$ by the rule: for all $\phi\in C_b(X_1\times X_3)$,
\begin{equation*}\begin{split} 
\int_{X_1\times X_3}\phi(x_1,x_3)\pi_{1,2}*&\pi_{2,3}(d(x_1,x_3)) \\ 
&\triangleq \int_{X_1\times X_2}\int_{X_3}\phi(x_1,x_3)\pi_{2,3}(dx_3|x_2)\pi_{1,2}(d(x_1,x_2)). 
\end{split}\end{equation*}
The operation $(\pi_{1,2},\pi_{2,3})\mapsto \pi_{1,2}*\pi_{2,3}$ is a composition of probabilities. In~\cite{Ambrosio} it is denoted by $\pi_{2,3}\circ\pi_{1,2}$ due to the natural analogy with the composition of functions. However, we prefer the designation $\pi_{1,2}*\pi_{2,3}$ because it explicitly points out  the marginals of the compositions of probabilities.

%Here $\pi_{2,3}(\cdot|x_2)$ denotes the conditional probability on $X_3$ given $x_2$.

\begin{remark} If $(X_4,\rho_4)$ is a metric space, $\pi_{3,4}$ is a probability on $X_3\times X_4$ such that marginal distributions of $\pi_{2,3}$ and $\pi_{3,4}$ on $X_3$ coincides, then
	$$(\pi_{1,2}*\pi_{2,3})*\pi_{3,4}=\pi_{1,2}*(\pi_{2,3}*\pi_{3,4}). $$
\end{remark}

Note that if $\pi_{m',m}$ is a plan between $m'$ and $m$, $\beta\in\mathcal{L}(m)$, then $\pi_{m',m}*\beta\in \mathcal{L}(m')$.

\begin{lemma}\label{lm:transfer_Theta} If $\tau>0$, $m,m'\in\ptd$, $\pi_{m',m}\in \Pi(m',m)$ is an optimal plan between $m'$ and $m$,  $\beta\in\mathcal{L}(m)$, then
	$$W_1(\Theta^\tau{}_\#\beta,\Theta^\tau{}_\#(\pi_{m',m}*\beta))\leq W_1(m',m). $$
\end{lemma}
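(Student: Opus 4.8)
The plan is to exhibit an explicit transport plan between $\Theta^\tau{}_\#\beta$ and $\Theta^\tau{}_\#(\pi_{m',m}*\beta)$ and estimate its cost, using the dual (Kantorovich--Rubinstein) characterization of $W_1$ only as a sanity check. First I would unwind the definitions. Write $\beta\in\mathcal{L}(m)$ via its disintegration $\beta(d(x,v))=\beta(dv|x)m(dx)$ along its marginal $m$ on $\td$. Then the composition $\pi_{m',m}*\beta$ is, by construction, the probability on $\td\times\rd$ obtained by first sampling $(x',x)\sim\pi_{m',m}$ and then $v\sim\beta(\cdot|x)$, i.e. $(\pi_{m',m}*\beta)(d(x',v))=\int_{\td}\beta(dv|x)\pi_{m',m}(d(x',x))$. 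Its marginal on $\td$ is $m'$, as noted just before the lemma.

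Next I would build the coupling. Consider the probability $\lambda$ on $\td\times\td\times\rd$ defined by $\lambda(d(x',x,v))\triangleq\beta(dv|x)\,\pi_{m',m}(d(x',x))$; equivalently $\lambda$ is the ``glued'' measure whose $(x,v)$-marginal is $\beta$ and whose $(x',v)$-marginal is $\pi_{m',m}*\beta$. Now push $\lambda$ forward under the map $(x',x,v)\mapsto(\Theta^\tau(x,v),\Theta^\tau(x',v))=(x+\tau v,\,x'+\tau v)$. This produces a coupling $\gamma$ of $\Theta^\tau{}_\#\beta$ and $\Theta^\tau{}_\#(\pi_{m',m}*\beta)$. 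Its transport cost is
\[
\int \rho_{\td}(x+\tau v,\,x'+\tau v)\,\lambda(d(x',x,v))
\le \int \|x-x'\|\,\lambda(d(x',x,v))
=\int_{\td\times\td}\|x-x'\|\,\pi_{m',m}(d(x',x)),
\]
where the inequality uses that on $\td=\rd/\mathbb{Z}^d$ the geodesic distance satisfies $\rho_{\td}(p+w,q+w)\le\|(p+w)-(q+w)\|=\|p-q\|$ for any lift, combined with $\rho_{\td}(x+\tau v,x'+\tau v)\le \rho_{\td}(x,x')$ and the fact that the marginal of $\lambda$ on $\td\times\td$ is exactly $\pi_{m',m}$ (the $v$-integration drops out since $\beta(\cdot|x)$ is a probability). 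Since $\pi_{m',m}$ is an \emph{optimal} plan between $m'$ and $m$, the right-hand side equals $W_1(m',m)$, and since $\gamma$ is a (not necessarily optimal) coupling, $W_1(\Theta^\tau{}_\#\beta,\Theta^\tau{}_\#(\pi_{m',m}*\beta))\le W_1(m',m)$.

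The only genuinely delicate point is the metric geometry of the torus: one must be careful that translation by the common vector $\tau v$ is $1$-Lipschitz (indeed an isometry) for the torus metric $\rho_{\td}$, so that shifting both points by the same amount does not increase their distance. On $\td=\rd/\mathbb{Z}^d$ with the quotient metric this is immediate because translations are isometries, but it is worth stating explicitly since $v\in\rd$ rather than in a fundamental domain. Everything else is bookkeeping with disintegrations and the definition of the composition $*$; no compactness or selection argument is needed, and the measurability of $x\mapsto\beta(\cdot|x)$ (hence of the glued measure $\lambda$) is guaranteed by the existence of the disintegration recalled in Section~\ref{sec:preliminaries}.
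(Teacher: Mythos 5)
Your proof is correct and is essentially the paper's argument read through the primal rather than the dual formulation of $W_1$: the glued measure $\lambda(d(x',x,v))=\beta(dv|x)\,\pi_{m',m}(d(x',x))$ you couple with is exactly the measure the paper integrates against, and the paper simply tests with $\phi\in\mathrm{Lip}_1(\td)$ and invokes the Kantorovich--Rubinstein duality instead of exhibiting the pushed-forward coupling. Your explicit remark that translation by the common vector $\tau v$ is an isometry of the torus metric is the same geometric fact the paper uses implicitly in the bound $\phi(x'+\tau v)-\phi(x+\tau v)\le\|x'-x\|$.
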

\begin{proof}
	Let $\phi\in \mathrm{Lip}_1(\td)$. We have that
	\begin{equation*}
	\begin{split}
	\inttd&\phi(y')(\Theta^\tau{}_\#(\pi_{m',m}*\beta))(dy')- \inttd\phi(y)(\Theta^\tau{}_\#\beta)(dy)\\&=
	\inttrd \phi(x'+\tau v)(\pi_{m',m}*\beta)(d(x',v))-\inttrd \phi(x+\tau v)\beta(d(x,v))\\&=
	\inttdtd\intrd[\phi(x'+\tau v)-\phi(x+\tau v)]\beta(dv|x)\pi_{m',m}(d(x',x))\\ &\leq 
	\inttdtd\intrd\|x'-x\|\beta(dv|x)\pi_{m',m}(d(x',x))=W_1(m',m).
	\end{split}
	\end{equation*} This fact, together with the definition of 1-Wasserstein metric, imply the conclusion of the lemma. 
\end{proof}

\begin{lemma}\label{lm:distance_F}
	Let $m,m'\in\ptd$, $\pi_{m',m}\in \Pi(m',m)$ be an optimal plan between $m'$ and $m$,  $\beta\in\mathcal{L}(m)$. Then
	\begin{equation*}\begin{split}\Bigl|\inttrd\mathrm{dist}(v,&F(x,m))\beta(d(x,v))\\- \inttrd&\mathrm{dist}(v,F(x,m'))(\pi_{m',m}*\beta)(d(x',v))\Bigr| \leq 2LW_1(m',m). \end{split}\end{equation*}
\end{lemma}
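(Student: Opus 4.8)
The plan is to express both integrals as double integrals against the optimal plan $\pi_{m',m}$ and the disintegration $\beta(dv|x)$, so that the two integrands can be compared pointwise via the Lipschitz estimate~\eqref{estima:F_Lipshitz}. First I would rewrite the left-hand integral. Disintegrating $\beta(d(x,v))=\beta(dv|x)\,m(dx)$ and using that $\pi_{m',m}\in\Pi(m',m)$ has second marginal $m$, one gets
\[
\inttrd\mathrm{dist}(v,F(x,m))\beta(d(x,v))=\inttdtd\intrd\mathrm{dist}(v,F(x,m))\beta(dv|x)\pi_{m',m}(d(x',x)).
\]

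Next I would rewrite the second integral directly from the definition of the composition $\pi_{m',m}*\beta$ (with the roles $X_1=\td$ carrying $m'$, $X_2=\td$ carrying $m$, $X_3=\rd$), obtaining
\[
\inttrd\mathrm{dist}(v,F(x',m'))(\pi_{m',m}*\beta)(d(x',v))=\inttdtd\intrd\mathrm{dist}(v,F(x',m'))\beta(dv|x)\pi_{m',m}(d(x',x)).
\]
Here I would note that the composition identity is stated for bounded continuous test functions, whereas $v\mapsto\mathrm{dist}(v,F(\cdot,\cdot))$ has linear growth; but since $F$ takes values in $B_R$ by~\eqref{estima:F_bound}, we have $\mathrm{dist}(v,F(x,m))\le\|v\|+R$, which is integrable against $\pi_{m',m}*\beta\in\mathcal{L}(m')$, so the identity extends to this integrand by a routine truncation argument. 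All integrals above are finite for the same reason.

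Subtracting the two displays, the difference equals $\inttdtd\intrd[\mathrm{dist}(v,F(x,m))-\mathrm{dist}(v,F(x',m'))]\beta(dv|x)\pi_{m',m}(d(x',x))$, and~\eqref{estima:F_Lipshitz} bounds the bracket in absolute value by $L(\|x-x'\|+W_1(m,m'))$ uniformly in $v$. Since $\int_{\rd}\beta(dv|x)=1$ and $\pi_{m',m}$ is a probability, integrating the constant term contributes $LW_1(m,m')$, while integrating $L\|x-x'\|$ against $\pi_{m',m}$ equals $LW_1(m',m)$ by optimality of the plan; adding these gives $2LW_1(m',m)$. The whole argument is essentially bookkeeping, and the only point requiring a little care is the extension of the composition identity from $C_b$ test functions to the linearly growing function $\mathrm{dist}(v,F(\cdot,\cdot))$, which the uniform bound~\eqref{estima:F_bound} together with the defining integrability of $\mathcal{L}(m)$ handles; I do not expect any genuine obstacle.
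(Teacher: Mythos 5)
Your proof is correct and follows essentially the same route as the paper: both rewrite the two integrals as double integrals against the optimal plan $\pi_{m',m}$ and the disintegration $\beta(dv|x)$, compare the integrands pointwise via~(\ref{estima:F_Lipshitz}), and use optimality of the plan to bound $\int\|x-x'\|\,\pi_{m',m}$ by $W_1(m',m)$. Your extra remark on extending the composition identity from $C_b$ test functions to the linearly growing integrand is a valid point of care that the paper passes over silently.
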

\begin{proof}
	From (\ref{estima:F_Lipshitz}) we obtain
	\begin{multline*}
	\Bigl|\inttrd\mathrm{dist}(v,F(x,m))\beta(d(x,v))\\- \inttrd\mathrm{dist}(v,F(x',m'))(\pi_{m',m}*\beta)(d(x',v))\Bigr|\\\leq
	\inttdtd\intrd|\mathrm{dist}(v,F(x,m))-\mathrm{dist}(v,F(x',m'))|\beta(dv|x)\pi_{m'm}(d(x',x))\\ \leq L\inttdtd\intrd(\|x'-x\|+W_1(m,m'))\beta(dv|x)\pi_{m'm}(d(x',x))\\=  2LW_1(m',m).
	\end{multline*}
\end{proof}

The following lemma is a cornerstone of the sufficiency part of the Viability theorem. It is analogous to~\cite[Lemma 3.4.3]{Aubin}.
\begin{lemma}\label{lm:one_step} Assume that $K\subset\ptd$ is compact and~(\ref{prop:nonemptyness}) is fulfilled. Then, for each  natural $n$, one can find a number $\theta_n\in (0,1/n)$ such that, for any $m\in K$, there exist $s\in (\theta_n,1/n)$, $\beta\in\mathcal{L}(m)$ and $\nu\in K$ satisfying the following properties:
	\begin{enumerate}
		\item $W_1(\Theta^s{}_\#\beta,\nu)<s/n$;
		\item $$\inttrd\mathrm{dist}(v,F(x,m))\beta(d(x,v))<1/n; $$
		\item $\mathrm{supp}(\beta)\subset \td\times B_a$.
	\end{enumerate}
\end{lemma}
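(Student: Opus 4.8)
The plan is to argue by contradiction, using the compactness of $K$ to upgrade the pointwise condition~(\ref{prop:nonemptyness}) to the claimed \emph{uniform} lower bound $\theta_n$ on the step, in the spirit of~\cite[Lemma 3.4.3]{Aubin}. The transfer Lemmas~\ref{lm:transfer_Theta} and~\ref{lm:distance_F} are precisely the tools that let a tangent probability at a limit point be carried over to nearby base measures while keeping control of the shift and of the defect in the dynamics.

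Fix $n$ and suppose no $\theta_n\in(0,1/n)$ works. Then for every $k$ there is $m_k\in K$ such that for every $s\in(1/k,1/n)$, every $\beta\in\mathcal{L}(m_k)$ with $\mathrm{supp}(\beta)\subset\td\times B_R$ and $\inttrd\mathrm{dist}(v,F(x,m_k))\beta(d(x,v))<1/n$, and every $\nu\in K$, one has $W_1(\Theta^s{}_\#\beta,\nu)\geq s/n$; taking the infimum over $\nu\in K$ this reads $\mathrm{dist}(\Theta^s{}_\#\beta,K)\geq s/n$. By compactness of $K$ we may pass to a subsequence with $m_k\to m_*\in K$. By~(\ref{prop:nonemptyness}) choose $\beta_*\in\mathcal{T}_K(m_*)\cap\mathcal{F}(m_*)$. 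Since $\beta_*\in\mathcal{F}(m_*)$, the nonnegative function $(x,v)\mapsto\mathrm{dist}(v,F(x,m_*))$ vanishes $\beta_*$-a.e., so $v$ lies in the closure of $F(x,m_*)$ for $\beta_*$-a.e.\ $(x,v)$; together with~(\ref{estima:F_bound}) this gives $\mathrm{supp}(\beta_*)\subset\td\times B_R$. Since $\beta_*\in\mathcal{T}_K(m_*)$, Definition~\ref{def:tangent} provides one number $\tau\in(0,1/n)$ with $\mathrm{dist}(\Theta^\tau{}_\#\beta_*,K)<\tau/(3n)$; fix $\tau$.

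Now let $\pi_k\in\Pi(m_k,m_*)$ be an optimal plan and set $\beta_k\triangleq\pi_k*\beta_*\in\mathcal{L}(m_k)$. Since in $\beta_k$ the $v$-coordinate is still distributed according to the conditional $\beta_*(\cdot\,|\,x)$, we again have $\mathrm{supp}(\beta_k)\subset\td\times B_R$. Lemma~\ref{lm:distance_F} together with $\beta_*\in\mathcal{F}(m_*)$ gives $\inttrd\mathrm{dist}(v,F(x,m_k))\beta_k(d(x,v))\leq 2LW_1(m_k,m_*)$, while Lemma~\ref{lm:transfer_Theta} gives $W_1(\Theta^\tau{}_\#\beta_*,\Theta^\tau{}_\#\beta_k)\leq W_1(m_k,m_*)$, so $\mathrm{dist}(\Theta^\tau{}_\#\beta_k,K)\leq W_1(m_k,m_*)+\tau/(3n)$. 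For all large $k$ along the subsequence we have simultaneously $1/k<\tau$, $\ 2LW_1(m_k,m_*)<1/n$, and $W_1(m_k,m_*)<\tau/(3n)$: the first makes $\tau$ an admissible value of $s$ for $m_k$, the second makes $\beta_k$ admissible in condition~3, and then $\mathrm{dist}(\Theta^\tau{}_\#\beta_k,K)<2\tau/(3n)<\tau/n$, contradicting the defining property of $m_k$. Hence some $\theta_n$ works, which is the assertion.

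The only points requiring genuine care are the quantifier bookkeeping needed to produce a \emph{uniform} $\theta_n$ (handled by the contradiction/subsequence device above), and checking that the composed probability $\beta_k=\pi_k*\beta_*$ inherits both the support bound $\mathrm{supp}(\beta_k)\subset\td\times B_R$ and the smallness of the dynamics defect; both follow directly from the structure of the composition $*$ and from Lemmas~\ref{lm:transfer_Theta} and~\ref{lm:distance_F}. One could argue equivalently without contradiction by covering $K$ with finitely many balls, on each of which a fixed pair $(\tau,\beta_*)$ transferred from the centre stays admissible, and taking $\theta_n$ to be the minimum of the resulting finitely many steps.
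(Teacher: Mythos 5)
Your proof is correct and is essentially the paper's argument in sequential/contradiction form: the paper carries out the same transfer of a fixed pair $(r_\mu,\hat\beta_\mu)$ via Lemmas~\ref{lm:transfer_Theta} and~\ref{lm:distance_F} to show that each $\mu\in K$ has an open neighbourhood on which that pair remains admissible, and then extracts a finite subcover and takes $\theta_n$ to be the minimum of the resulting times --- precisely the covering variant you mention in your closing sentence. No gaps.
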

\begin{proof}

	First,  we claim that, given probability $\mu\in K$, and natural $n$, there exist a time $r_\mu\in (0,1/n)$ and a probability $\hat{\beta}_\mu\in \mathcal{L}(\mu)$ such that 
	\begin{equation}\label{ineq:dist_beta_mu}
	\mathrm{dist}(\Theta^{r_\mu}{}_\#\hat{\beta}_\mu,K)<\frac{r_\mu}{2n};
	\end{equation}
	\begin{equation}\label{ineq:int_dist}
	\int_{\td\times \rd}\mathrm{dist}(v,F(x,\mu))\hat{\beta}_\mu(d(x,v))<\frac{1}{2n}; 
	\end{equation}
	\begin{equation}\label{incl:supp_beta_mu}
	\mathrm{supp}(\hat{\beta}_\mu)\subset \td\times B_a.
	\end{equation}	
	Indeed, given $\beta\in\mathcal{T}^a_K(\mu)\cap \mathcal{F}(\mu)$, one can choose $r_\mu\in (0,1/n)$ and $\hat{\beta}_\mu$ such that (\ref{ineq:dist_beta_mu}) and (\ref{incl:supp_beta_mu}) are fulfilled and 
	$$W_1(\hat{\beta}_\mu,\beta)<\frac{1}{2n\cdot\max\{L,1\}}.$$ 
	Since the function $(x,v)\mapsto \mathrm{dist}(v;F(x,\mu))$ is Lipschitz continuous for the constant $\max\{L,1\}$ (see (\ref{estima:F_Lipshitz})), we get inequality (\ref{ineq:int_dist}).
	
	Let $\mathcal{E}_n(\mu)$ be a subset of $\ptd$  such that, for any $m\in\mathcal{E}_n(\mu)$, there exists a probability 
	$\beta\in\mathcal{L}(m) $ satisfying the following conditions:
	\begin{list}{(E\arabic{tmp})}{\usecounter{tmp}}
		\item $\mathrm{dist}(\Theta^{r_\mu}{}_\#\beta,K)<r_\mu/n;$
		\item $$\inttrd\mathrm{dist}(v,F(x,m))\beta(d(x,v))<1/n; $$
		\item $\mathrm{supp}(\beta)\subset \td\times B_a$.
	\end{list}
	
	Properties (\ref{incl:supp_beta_mu})--(\ref{ineq:int_dist}) yield that $\mu$ belongs to $\mathcal{E}_n(\mu)$. Thus, 
	\begin{equation}\label{incl:covering_E}
	K\subset \bigcup_{\mu\in K}\mathcal{E}_n(\mu).
	\end{equation}
	
	Now we show that each set $\mathcal{E}_n(\mu)$ is open. To this end we prove that, for any $m\in \mathcal{E}_n(\mu)$, one can find a positive constant $\varepsilon$ depending on $n$, $\mu$ and $m$ such that $B_\varepsilon(m)\subset\mathcal{E}_n(\mu)$. First, observe that since $m\in \mathcal{E}_n(\mu)$, there exists $\beta\in\mathcal{L}(m)$ satisfying conditions (E1)--(E3). Now let $m'\in \ptd$.
	
	Put \begin{equation}\label{intro:beta_prime}
	\beta'\triangleq \pi_{m',m}*\beta,
	\end{equation}  where $\pi_{m',m}$ is an optimal plan between $m'$ and $m$.
	We have that $\beta'\in \mathcal{L}(m')$. %Let us prove that one can choose constant $\varepsilon$ such that $\beta'$ satisfies conditions (E1)--(E3). 
	Lemma~\ref{lm:transfer_Theta} yields that 
	\begin{equation}\label{ineq:beta_prime_transfer}
	\begin{split}
	\mathrm{dist}(\Theta^{r_\mu}{}_\#\beta',K)
	\leq W_1(\Theta^{r_\mu}{}_\#\beta',\Theta^{r_\mu}{}_\#\beta)&+\mathrm{dist}(\Theta^{r_\mu}{}_\#\beta,K)\\ \leq W_1(m',&m)+\mathrm{dist}(\Theta^{r_\mu}{}_\#\beta,K).\end{split}
	\end{equation}
	Further, from Lemma~\ref{lm:distance_F} it follows  that
	\begin{equation*}\begin{split}\inttdtd\mathrm{dist}(&v,F(x,m'))\beta'(d(x,v))\\&\leq 2LW_1(m',m)+\inttrd\mathrm{dist}(v,F(x,m))\beta(d(x,v)).\end{split}\end{equation*}

	This and~(\ref{ineq:beta_prime_transfer}) give that if \begin{equation*}\begin{split}
	W(m',m')<\varepsilon \triangleq\min\Bigl\{&\frac{1}{n}-\mathrm{dist}(\Theta^{r_\mu}{}_\#\beta,K),
	\\&\frac{1}{2Ln}-\frac{1}{2L} \inttrd\mathrm{dist}(v,F(x,m))\beta(d(x,v))\Bigr\},\end{split} \end{equation*} then conditions (E1) and (E2) are fulfilled for $\beta'$. Furthermore, condition (E3) holds true for $\beta'$ by~(\ref{intro:beta_prime}). Hence, $B_\varepsilon(m)\subset\mathcal{E}_\mu$. Therefore, the set $\mathcal{E}_n(\mu)$ is open.
	
	Since $K$ is a closed subset of the compact space $\ptd$, and $\{\mathcal{E}_n(\mu)\}_{\mu\in K}$ is an open cover of $K$, there exists a finite number of probabilities $\mu_1,\ldots,\mu_I\in K$ such that
	$$%\begin{equation}\label{incl:K_finite_cover}
	K\subset \bigcup_{i=1}^I\mathcal{E}_n(\mu_i). 
	$$%\end{equation} 
	Note that $r_{\mu_i}\in (0,1/n)$.
	Put $$\theta_n\triangleq\min_{i\in \overline{1,I}}r_{\mu_i}.$$
	
	Now let $m\in K$. There exists a number $i$ such that $m\in\mathcal{E}(\mu_i)$. This means that, for some $\beta\in\mathcal{L}(m)$ and $\mu=\mu_i$, conditions (E1)--(E3) hold true. To complete the proof of the lemma it suffices to put $s\triangleq r_{\mu_i}$ and to choose $\nu\in K$ to be nearest~to~$\Theta^s{}_\#\beta$. 
\end{proof}

\section{Proof of the Viability theorem. Sufficiency}\label{sect:sufficiency}
To prove the sufficiency part of the Viability theorem we introduce  the concatenation of probabilities on space of motions in the following way. First, if $x_1(\cdot)\in\mathcal{C}_{s,r}$, $x_2(\cdot)\in\mathcal{C}_{r,\theta}$ are such that $x_1(r)=x_2(r)$, then
$$(x_1(\cdot)\odot x_2(\cdot))(t)\triangleq \left\{\begin{array}{cc}
x_1(t),& t\in [s,r],\\
x_2(t),& t\in [r,\theta].
\end{array}
\right. $$ Note that $x_1(\cdot)\odot x_2(\cdot)\in \mathcal{C}_{s,\theta}$.

Now let $\chi_1\in\pc{s}{r}$, $\chi_2\in\pc{r}{\theta}$ be such that $e_r{}_\#\chi_1=e_r{}_\#\chi_2=m$. Let $\{\chi_2(\cdot|y)\}_{y\in\td}$ be a family of conditional probabilities such that, for any $\phi\in C_b(\mathcal{C}_{r,\theta})$,
$$\int_{\mathcal{C}{r,\theta}}\phi(x(\cdot))\chi_2(d(x(\cdot)))= \inttd\int_{\mathcal{C}_{r,\theta}}\phi(x(\cdot))\chi_2(d(x(\cdot))|y)m(dy). $$ Note that $\mathrm{supp}(\chi_2(\cdot|y))\subset \{x(\cdot)\in\mathcal{C}{r,\theta}:x(r)=y\}$
Finally, for $A\subset\pc{s}{\theta}$ put 
$$(\chi_1\odot\chi_2)(A)\\\triangleq \int_{\mathcal{C}_{s,r}} \chi_2(\{x_2(\cdot):(x_1(\cdot)\odot x_2(\cdot))\in A\}|x_1(r))\chi_1(d(x_1(\cdot))).$$

\begin{proof}[Proof of Theorem ~\ref{th:viability}. Sufficiency]
Given $m_0\in K$, $T>0$, and a natural number $n$, let us construct  a number $J_n$ and sequences $\{t_n^j\}_{j=0}^{J_n}\subset [0,+\infty)$, $\{\mu_n^j\}_{j=0}^{J_n}\subset\ptd$, $\{\nu_n^j\}_{j=0}^{J_n}\subset K$, $\{\beta^j_n\}_{j=1}^{J_n}\subset \ptrd$ by the following rules:
\begin{enumerate}
	\item $t_n^0\triangleq 0$, $\mu_n^0=\nu_n^0\triangleq m_0$;
	\item If $t_n^j<T$, then choose $s_n^{j+1}\in (\theta_n,1/n)$, $\beta_n^{j+1}\in\mathcal{L}(\nu_n^j)$ and $\nu_n^{j+1}\in K$ satisfying conditions of Lemma~\ref{lm:one_step} for $m=\nu_n^j$. Put $t_n^{j+1}\triangleq t_n^j+s_n^{j+1}$, $\mu_n^{j+1}\triangleq \Theta^{s_n^{j+1}}{}_\#(\pi_n^j*\beta_n^{j+1})$, where $\pi_n^j$ is an optimal plan between $\mu_n^j$ and $\nu_n^j$.
	\item If $t_n^j\geq T$, then put $J_n\triangleq j$.
\end{enumerate}
Since $t_n^{j+1}-t_n^j\geq\theta_n$, this procedure is finite. 

Now let us prove that, for $j=\overline{0,J_n}$,
\begin{equation}\label{ineq:W_1_mu_nu}
W_1(\mu_n^j,\nu_n^j)\leq t_n^j/n.
\end{equation}
For $j=0$ inequality~(\ref{ineq:W_1_mu_nu}) is fulfilled by the construction. Assume that~(\ref{ineq:W_1_mu_nu}) holds true for some~$j\in \overline{0,J_n-1}$. We have that
\begin{equation}\label{ineq:triangle_mu_nu_n_j}
\begin{split}
W_1(\mu_n^{j+1},\nu_n^{j+1})=W_1(\Theta^{s_n^{j+1}}{}_\#(\pi_n^j&*\beta_n^{j+1}),\nu_n^{j+1})\\ \leq W_1(\Theta^{s_n^{j+1}}{}_\#(&\pi_n^j*\beta_n^{j+1}),\Theta^{s_n^{j+1}}{}_\#\beta_n^{j+1})\\
&+ W_1(\Theta^{s_n^{j+1}}{}_\#\beta_n^{j+1},\nu_n^{j+1})).
\end{split}
\end{equation}
Recall that $\pi_n^j$ denotes the optimal plan between $\mu_n^j$ and $\nu_n^j$.
This, inequality~(\ref{ineq:triangle_mu_nu_n_j}), the choice of $s_n^{j+1}$, $\beta_n^{j+1}$, $\nu_n^{j+1}$ and Lemmas~\ref{lm:transfer_Theta},~\ref{lm:one_step} imply that
$$W_1(\mu_n^{j+1},\nu_n^{j+1})\leq W_1(\mu_n^{j},\nu_n^{j})+s_n^{j+1}/n. $$ Hence, using assumption, we get $$W_1(\mu_n^{j+1},\nu_n^{j+1})\leq t_n^{j+1}/n.$$ This proves~(\ref{ineq:W_1_mu_nu})

Put $$\tau_n^j\triangleq\left\{\begin{array}{ll}
t_j^n, & j=0,\ldots, J_n-1,\\
T, & j=J_n.
\end{array}\right.$$ 

For $j=\overline{1,J_n}$ define the map $\Lambda^j_n:\tdxptd\rightarrow\mathcal{C}_{t_n^{j-1},t_n^{j}}$ by the rule: 
$$(\Lambda^j_n(x,v))(t)\triangleq x+(t-\tau_n^{j-1})v,\ \ t\in[\tau_n^{j-1},\tau_n^{j}]. $$

Put $\chi_n^j\triangleq \Lambda_n^j{}_\#(\pi_n^{j-1}*\beta_n^{j})$. Note that $e_{0}{}_\#\chi_n^1=m_0$, $e_{\tau_n^{j}}{}_\#\chi_n^j=e_{\tau_n^{j}}{}_\#\chi_n^{j+1}$. Thus,
the probability $$\chi_n\triangleq \chi_n^1\odot\ldots\odot\chi_n^{J_n}$$ is well-defined. Note  that $\chi_n\in\pc{0}{T}$.

Recall that $\operatorname{supp}(\beta_n^j)\subset \td\times B_a$. Hence, if $x(\cdot)\in \mathrm{supp}(\chi_n)$, then, for all $t',t''\in [0,T]$, \begin{equation}\label{estima:dot_x_chi_n}
\|x(t')-x(t'')\|\leq a|t'-t''|. 
\end{equation}

Denote $m_n(t)\triangleq e_t{}_\#\chi_n$. Inequality~(\ref{estima:dot_x_chi_n}) yields that 
\begin{equation}\label{ineq:lip_m_n}
W_1(m_n(t'),m_n(t''))\leq a|t'-t''|.
\end{equation}
We have that
$\ m_n(t_n^j)=\mu_n^j. $ Therefore, using~(\ref{ineq:W_1_mu_nu}),~(\ref{ineq:lip_m_n}) and inclusion $\nu_n^j\in K$, we obtain that
\begin{equation}\label{ineq:dist_m_n_K}
\mathrm{dist}(m_n(t),K)\leq (T+a)/n.
\end{equation}

%Thus, there exists a  sequence of  $\{v^i\}_{i=1}^{J_n}$ such that if $t\in [\tau^{i-1}_n,\tau^i_n]$, then $x(t)=x(\tau^{i-1})+(t-\tau^{i-1}_n)v^i$. Note that $\|v^i\|\leq R$. 
Given $s,r\in [0,T]$, $s<r$ let $I^0_n,I^1_n$ be such that $s\in [\tau_n^{I^0_n-1},\tau_n^{I^0_n}]$, $r\in [\tau_n^{I^1_n-1},\tau_n^{I^1_n}]$. For sufficiently large $n$, $I^0_n<I_n^1$. Put $\zeta_n^{I_0-1}\triangleq s$, $\zeta_n^{i}\triangleq \zeta_n^{i}$, $i=I_n^0,\ldots, I_n^1-1$, $\zeta_n^{I_1}\triangleq r$. For $i=I_n^0,\ldots, I_n^1$, denote $\delta^i_n\triangleq \zeta_n^i-\zeta_n^{i-1}$. 

Now assume that $x(\cdot)\in \mathrm{supp}(\chi_n)$. Using inequalities~(\ref{estima:dot_x_chi_n}),~(\ref{ineq:lip_m_n}) together with the fact that, for $t\in [\zeta_n^{i-1},\zeta_n^i]$, $|t-\tau_n^{i-1}|\leq 1/n$, we get
\begin{equation*}\label{ineq:dist_x_m_n}
\begin{split}\mathrm{dist}&\left(x(r)-x(s),\int_s^r F(x(t),m_n(t))d t\right)\\&\leq
\sum_{i=I_n^0}^{I^1_n}\mathrm{dist}\left(x(\zeta^i_n)-x(\zeta^{i-1}_n),\int_{\zeta^{i-1}_n}^{\zeta^i_n} F(x(t),m_n(t)dt\right)\\
&\leq 
\sum_{i=I_n^0}^{I^1_n}\mathrm{dist}\left(x(\zeta^i_n)-x(\zeta^{i-1}_n), \delta^i_nF(x(\tau^{i-1}_n),m_n(\tau^{i-1}_n))\right)+2(r-s)La/n.
\end{split}
\end{equation*}

Thus,
\begin{equation}\label{ineq:intC_sup_dist}
\begin{split}
\int_{\mathcal{C}_{0,T}}&\mathrm{dist}\Bigl( x(r)-x(s),\int_r^s F(x(t),m_n(t))dt\Bigr)\chi_n(dx(\cdot))\\ &\leq
\sum_{i=I_n^0}^{I^1_n}\int_{\mathcal{C}_{\zeta_n^{i-1},\zeta_n^{i}}}\mathrm{dist} \Bigl(x(\zeta^i_n)-x(\zeta^{i-1}_n), \delta^i_nF(x(\tau^{i-1}_n),m_n(\tau^{i-1}_n))\Bigr)\chi_n^i(dx(\cdot))\\&{} \hspace{274pt}+2(r-s)La/n.
\end{split}
\end{equation}
By the construction of $\chi_n^i$ we have that
\begin{equation*}
\begin{split}
\int_{\mathcal{C}_{\zeta_n^{i-1},\zeta_n^{i}}}&\mathrm{dist} \Bigl(x(\zeta^i_n)-x(\zeta^{i-1}_n), \delta^i_nF(x(\tau^{i-1}_n),m_n(\tau^{i-1}_n))\Bigr)\chi_n^i(dx(\cdot))\\&=
\inttrd \mathrm{dist}\left(\delta^i_nv, \delta^i_nF(x,\mu_n^{i-1}))\right)(\pi_n^{i-1}*\beta_n^{i})(d(x,v)) \\&=\delta^i_n\inttrd \mathrm{dist}\left(v, F(x,\mu_n^{i-1})\right)(\pi_n^{i-1}*\beta_n^{i})(d(x,v)).
\end{split}
\end{equation*}
This, Lemma \ref{lm:distance_F}, inequality (\ref{ineq:W_1_mu_nu}) and the choice of $\pi_n^{i-1}$ yield the estimate
\begin{equation*}
\begin{split}
\int_{\ac{\tau_n^{i-1}}{\tau_n^{i}}}\mathrm{dist}&\left(x(\zeta^i_n)-x(\zeta^{i-1}_n), \delta^i_nF(x(\tau^{i-1}_n),m_n(\tau^{i-1}_n))\right)\chi_n^i(dx(\cdot))\\
&\leq\delta^i_n\inttrd \mathrm{dist}\left(v, F(x,\nu^{i-1}_n)\right)\beta_n^{i}(d(x,v))+\delta^{i}_n2LT/n.
\end{split}\end{equation*}
Therefore, taking into account equality $\sum \delta^i_n=(r-s)$, inequality (\ref{ineq:intC_sup_dist}), the choice of $\beta_n^j$ and Lemma \ref{lm:one_step} we conclude that
\begin{equation}\label{estima:chi_n_distance_F}
\begin{split}
\int_{\ac{0}{T}}\mathrm{dist}\Bigl( x(r)-x(s),\int_s^r &F(x(t),m_n(t))dt\Bigr)\chi_n(dx(\cdot)) \\ &\leq 
(r-s)(1+2LT+2La)/n.
\end{split}
\end{equation}

Furthermore, we have that, for each natural $n$, $\mathrm{supp}(\chi_n)$ lie in the compact set of $a$-Lipschitz continuous function from $[0,T]$ to $\td$. By~\cite[Proposition 7.1.5]{Ambrosio} the sequence $\{\chi_n\}$ is relatively compact in $\pc{0}{T}$. This means that there exist a sequence $n_l$ and probability $\chi\in\pc{0}{T}$ such that 
$$W_1(\chi_{n_l},\chi)\rightarrow 0\mbox{ as }l\rightarrow \infty. $$  
Notice that $x(\cdot)\in \mathrm{supp}(\chi)$, then $x(\cdot)$ is $a$-Lipschitz continuous and, thus, absolutely continuous.

Put $m(t)\triangleq e_t{}_\#\chi$. Inequality~(\ref{estima:wasser}) implies that, for any $t\in [0,T]$,
\begin{equation}\label{estima:wasser_n_l}
W_1(m(t),m_{n_l}(t))\leq W_1(\chi,\chi_{n_l}). 
\end{equation}

Since the functions $\mathcal{C}_{0,T}\ni x(\cdot)\mapsto \mathrm{dist}\bigl(x(r)-x(s),\int_s^rF(x(t),m(t))dt\bigr)$ is Lipschitz continuous for the constant $(2+L(r-s))$, using~(\ref{estima:dist_F_int_Lipshitz})~and~(\ref{estima:wasser_n_l}),  we have that
\begin{equation*}
\begin{split}
\int_{\ac{0}{T}}\mathrm{dist}\Bigl(x(r)-x(s),\int_s^rF(x(t),m(t))&dt\Bigr)\chi(d(x(\cdot)))\\ \leq 
\int_{\ac{0}{T}}\mathrm{dist}\Bigl(x(r)-x(s),\int_s^rF(x(t),&m_{n_l}(t))dt\Bigr)\chi_{n_l}(d(x(\cdot)))\\+ (&2+2L(r-s))W_1(\chi,\chi_{n_l}).
\end{split}\end{equation*}
Thus, by (\ref{estima:chi_n_distance_F}) $$\int_{\ac{0}{T}}\mathrm{dist}\Bigl(x(r)-x(s),\int_s^rF(x(t),m(t))dt\Bigr)\chi(d(x(\cdot)))=0. $$ This means that, for any $x(\cdot)\in\mathrm{supp}(\chi)$ and any $r,s\in [0,T]$, $s<r$, $$x(r)-x(s)\in \int_s^rF(x(t),m(t))dt.$$ Hence, each  $x(\cdot)\in\mathrm{supp}(\chi)$ solves~(\ref{diff_incl:F}). Consequently, $m(\cdot)$ is a solution to MFDI~(\ref{sys:mftdi}).

Finally,
$$\mathrm{dist}(m(t),K)\leq W_1(m(t),m_{n_l}(t))+\mathrm{dist}(m_{n_l}(t),K). $$
This, ~(\ref{ineq:dist_m_n_K}) and~(\ref{estima:wasser_n_l}) yield that, for any $t\in [0,T]$,
$$m(t)\in K. $$

Since $m(\cdot)$ is a solution of MFDI~(\ref{sys:mftdi}), we conclude that $K$ is viable under MFDI~(\ref{sys:mftdi}). \end{proof}

\section{Proof of Viability theorem. Necessity}\label{sect:necissity}
Notice that, if $[0,T]\ni t\mapsto m(t)$ solves MFDI~(\ref{sys:mftdi}), then 
\begin{equation}\label{ineq:lip_m}
W_1(m(t'),m(t''))\leq R|t'-t''|.
\end{equation} Indeed, let $\chi\in\pc{0}{T}$ be such that $m(t)=e_t{}_\#\chi$ and, for any $x(\cdot)\in\mathrm{supp}(\chi)$, $\dot{x}(t)\in F(x(t),m(t))$ a.e. $t\in [0,T]$. Define the plan between $m(t')$ and $m(t'')$ by the rule: for $\phi\in C(\td\times\td)$, 
$$\inttdtd\phi(x',x'')\pi(d(x',x''))=\int_{\ac{0}{T}}\phi(x(t'),x(t''))\chi(d(x(\cdot))). $$ We have that
\begin{equation*}\begin{split}W_1(m(t'),m(t''))&\leq \inttdtd\|x'-x''\|\pi(d(x',x''))\\&=\int_{\ac{0}{T}}\|x(t')-x(t'')\|\chi(d(x(\cdot)))\leq R|t'-t''|.\end{split}\end{equation*}

Now define the operator $\Delta^\tau:\mathcal{C}_{0,T}\rightarrow\td\times\rd$ by the following rule:
\begin{equation}\label{def:Delta}
\Delta^\tau(x(\cdot))\triangleq \left(x(0),\frac{x(\tau)-x(0)}{\tau}\right) .
\end{equation} This operator will play the crucial role in the following.

\begin{proof}[Proof of Theorem  ~\ref{th:viability}. Necessity]
Let $m_0\in K$. By assumption, there exist a time $T$, a flow of probabilities on $[0,T]$ $m(\cdot)$ and a probability $\chi\in\pc{0}{T}$ be such that 
\begin{itemize}
	\item $m(t)=e_t{}_\#\chi$,
	\item $m(0)=m_0$,
	\item if $x(\cdot)\in\mathrm{supp}(\chi)$, then $x(\cdot)$ is absolutely continuous and $\dot{x}(t)\in F(x(t),m(t))$ a.e. $t\in [0,T]$,
	\item $m(t)\in K$.
\end{itemize} 

Put $$%\begin{equation}\label{intro:beta_tau_def}
\beta_\tau\triangleq \Delta^\tau{}_\#\chi.
$$%\end{equation}

The definitions of the operators $\Theta^\tau$ and $\Delta^\tau$ (see~(\ref{def:Theta}) and~(\ref{def:Delta})) yield that
$$\Theta^\tau{}_\#\beta_\tau=m(\tau). $$ This means that
\begin{equation}\label{incl:beta_tau_K}
\Theta^\tau{}_\#\beta_\tau\in K.
\end{equation}

Further, the definition of $\beta_\tau$ implies that
\begin{equation}\label{incl:supp_beta_tau}
\mathrm{supp}(\beta_\tau)\subset \td\times B_R.
\end{equation}

Now let us prove that 
\begin{equation}\label{ineq:dist_F_beta_tau}
\int_{\td\times\rd}\mathrm{dist}(v,F(x,m_0))\beta_\tau(d(x,v))\leq LR \tau.
\end{equation}
Indeed, if $x(\cdot)$ belongs to $\mathrm{supp}(\chi)$ then it solves differential inclusion~(\ref{diff_incl:F}). In particular, $\|x(t)-x(0)\|\leq Rt$. Hence, for  $x(\cdot)\in\mathrm{supp}(\chi)$,
\begin{equation}\label{equal:x_tau_int_F}
\mathrm{dist}\left(x(\tau)-x(0),\int_0^\tau F(x(t),m(t))dt\right)=0.
\end{equation}
Using inequality (\ref{estima:dist_F_int_Lipshitz}) we obtain, for $x(\cdot)\in\mathrm{supp}(\chi)$,
\begin{equation*}\begin{split}\mathrm{dist}&(\Delta^\tau(x(\cdot)),F(x(0),m_0) )\\&=\frac{1}{\tau}\mathrm{dist}\left(x(\tau)-x(0),\int_0^\tau F(x(0),m(0))dt \right)\\&\leq \frac{1}{\tau}\mathrm{dist}\left(x(\tau)-x(0),\int_0^\tau F(x(t),m(t))dt \right)+LR\tau.\end{split}\end{equation*}
This  and (\ref{equal:x_tau_int_F}) proves~(\ref{ineq:dist_F_beta_tau}).

By inclusion~(\ref{incl:supp_beta_tau}) and \cite[Proposition 7.1.5]{Ambrosio} we conclude that
there exist a sequence $\{\tau_n\}_{n=1}^\infty$ and a probability $\beta\in\ptrd$ such that
$$\tau_n\rightarrow 0,\ \ W_1(\beta_{\tau_n},\beta)\rightarrow 0\mbox{ as } n\rightarrow \infty.$$

This and (\ref{incl:beta_tau_K}) imply that
\begin{equation}\label{incl:beta_tangent}
\beta\in\mathcal{T}_K^a(m_0)
\end{equation} for $a=R$.

Further,  passing to the limit in (\ref{ineq:dist_F_beta_tau}) we get the inclusion
$$\beta\in\mathcal{F}(m_0). $$

Combining this and (\ref{incl:beta_tangent}), we conclude that (\ref{prop:nonemptyness}) holds true for any $m\in\td$ with the constant $a$ that does not depend on $m$.
\end{proof}

\bibliography{th_28_wasserstein_a}

\end{document}